\newcounter{MYtempeqncnt}
\newcommand{\norm}[1]{\lVert#1\rVert}
\newtheorem{theorem}{Theorem}[section]
\newtheorem{lemma}[theorem]{Lemma}
\theoremstyle{definition}
\theoremstyle{remark}
\newtheorem{remark}[theorem]{Remark}
\numberwithin{equation}{section}
\renewcommand{\Re}{{\mathbb{R}}}
\newcommand{\R}{{\mathbb{R}}}
\newcommand{\N}{{\mathbb{N}}}
\begin{document}


\title{Dirty derivatives for output feedback stabilization}

\author[Matteo~Marchi]{Matteo Marchi}
\address{Department of Electrical and Computer Engineering\\ 
University of California at Los Angeles\\
Los Angeles, CA 90095-1594, USA}
\email{matmarchi@ucla.edu}

\author[Lucas~Fraile]{Lucas Fraile}
\address{Department of Electrical and Computer Engineering\\ 
University of California at Los Angeles\\
Los Angeles, CA 90095-1594, USA}
\email{lfrailev@ucla.edu}

\author[Paulo~Tabuada]{Paulo Tabuada}
\address{Department of Electrical and Computer Engineering\\ 
University of California at Los Angeles\\
Los Angeles, CA 90095-1594, USA}
\urladdr{http://www.ee.ucla.edu/$\sim$tabuada}
\email{tabuada@ee.ucla.edu}

\thanks{The first two authors contributed equally to this paper.}%
\thanks{This work was supported in part by the CONIX Research Center, one of six centers in JUMP, a Semiconductor Research Corporation (SRC) program sponsored by DARPA.}

\begin{abstract}
Dirty derivatives are routinely used in industrial settings, particularly in the implementation of the derivative term in PID control, and are especially appealing due to their noise-attenuation and model-free characteristics. In this paper, we provide a Lyapunov-based proof for the stability of linear time-invariant control systems in controller canonical form when utilizing dirty derivatives in place of observers for the purpose of output feedback. This is, to the best of the authors' knowledge, the first time that stability proofs are provided for the use of dirty derivatives in lieu of derivatives of different orders. In the spirit of adaptive control, we also show how dirty derivatives can be used for output feedback control when the control gain is unknown. 
\end{abstract}

\maketitle

\section{Introduction}
\label{sec:Introduction}

In the field of control, the need for the computation or estimation of time derivatives of a signal is a common occurrence in both theoretical and practical works, especially in the design and implementation of output-feedback controllers. An example being the ubiquitous PID controller, that explicitly includes a (sometimes omitted) term proportional to the derivative of the error signal.

While the task of differentiating a signal is conceptually simple, naive approaches are often inadequate for purposes of control. Problems often encountered are the amplification of noise due to the ill-posedness of the task \cite{engl1996regularization,lu2013regularization,KhalilNoiseHighGain,chartrand2011numerical}, and non-causality of an ideal differentiating filter \cite{oppenheim1997signals}. The approach commonly adopted to address both these issues involves calculating the derivative $\dot y$ of a signal $y$ in an approximate way by computing a low-pass filtered version of $\dot y$. In the context of PID control, this is presented as the addition of a pole to the transfer function of the derivative operator \cite{Astrom, AstromBook,PIDanalysis}. This notion of approximate derivative of $y$ is what we refer to as ``dirty derivative'' of $y$.

Despite the pervading use of these dirty derivatives in close-loop control \cite{Nunes,Astrom, AstromBook,PIDanalysis}, theoretical foundations have only been recently established in Loria's seminal work~\cite{Loria}, where a controller is designed by employing dirty derivatives as approximations for unavailable state measurements, dispensing with the need of designing an observer. With this controller, Loria is able to demonstrate how to asymptotically stabilize Euler-Lagrange plants by output feedback, solving a $25$-year-old open problem in control theory. Since then, more authors have resorted to dirty derivatives within output feedback~\cite{Sira,Miranda,Furtat,Jaafar}. Note that for systems of relative degree higher than 2, the output feedback controllers proposed in Loria's work \cite{Loria} require a combination of dirty derivatives and an observer, see Remark 5 in \cite{Loria}.

Dirty derivatives offer several advantages over other techniques to estimate derivatives such as algebraic  de-noising and derivative estimation approaches \cite{AlgebraicDDs} and  low-power peaking-free  high-gain  observers \cite{PFHighGain}. The first advantage is the simplicity of their design: only one parameter needs to be designed and no knowledge about the plant is required. The second advantage is robustness with respect to measurement noise. This is illustrated in Section \ref{sec:Examples} where we show, through numerical simulations, that dirty derivatives outperform the algebraic de-noising and derivative estimation approach and the low-power peaking-free high-gain observers. Motivated by the extensive practical applications, lack of theoretical guarantees, and the previously described advantages with respect to alternative techniques, we provide sufficient conditions for asymptotic stabilization of linear systems in controller form when dirty derivatives are used to approximate derivatives of arbitrary order. We also address the important case where the control gain is unknown.

\section{Preliminaries}
\label{sec:Preliminaries}
Let $x_1: \Re_{\geq 0} \to \Re$ be a smooth function, we call $\widehat x_2: \Re_{\geq 0} \to \Re$ a first-order dirty derivative of $x_1$ if the Laplace transforms of $\widehat x_2$ and $x_1$, respectively $\widehat X_2$ and $X_1$, satisfy the relationship:
\begin{equation}
  \widehat X_2(s) = \frac{\sigma s}{s + \sigma} X_1(s),  
\end{equation}
for some positive $\sigma \in \Re^+$, under the assumption of zero initial conditions. Identifying $sX_1$ as the Laplace transform of $\dot{x}_1$, it is straightforward to see that dirty derivatives can be interpreted as the output of a low-pass filter with input $\dot{x}_1(t)$ and a pole set at $\sigma$. This portrays one of the two properties that make dirty derivatives appealing; dirty derivatives provide a filtered version of the derivative of a signal, providing robustness against measurement noise. The other main property that makes dirty derivatives particularly interesting is that they can be computed through the state space representation:
\begin{equation}
\label{eq:dirtyderivative1}
\begin{aligned}
\dot{q}_1 = -\sigma (q_1 + \sigma x_1), \quad 
\widehat x_2 = q_1 + \sigma x_1.
\end{aligned}
\end{equation}
This implies we can obtain $\widehat x_2$ solely based on measurements of $x_1$, providing a low-gain approach for approximating derivatives. For notational ease, in the rest of the paper we resort to a slight abuse of notation and define $\widehat x_1$ to be $x_1$. 
We can now introduce higher order dirty derivatives, which we recursively define as:
\begin{equation}
\label{eq:dirtyderivative2}
\begin{aligned}
\dot{q}_i = -\sigma (q_i + \sigma \widehat x_i), \quad \widehat x_{i+1} = q_i + \sigma \widehat x_i.
\end{aligned}
\end{equation}
Note that under this representation, the dirty derivative's own derivatives become:
\begin{equation}
\begin{aligned}
\dot{\widehat x}_{i+1} &= \dot{q}_i+ \sigma \dot{ \widehat x}_i 
= -\sigma (q_i + \sigma \widehat x_i) + \sigma \dot{ \widehat x}_i = -\sigma (\widehat x_{i+1}-\dot{ \widehat x}_i). 
\end{aligned}\notag
\end{equation}
The following analysis is based on the latter expression, yet it is important to keep in mind that dirty derivatives are implemented using equations \eqref{eq:dirtyderivative1}.
\section{Problem Statement}
\label{Sec:Models}
We consider a single-input single-output linear time-invariant system described by:
\begin{equation}
\label{eq:system1}
\begin{aligned}
\dot{x}=Ax + Bu, \quad 
y={}Cx, 
\end{aligned}
\end{equation}
where $A \in \R^{n \times n}$, $B \in \R^{n \times 1}$, and $C \in \R^{1 \times n}$ are known matrices and by $x\in \R^n$, $u\in \R$, $y\in \R$ we denote the state, input and output, respectively. Furthermore, we assume the matrices $A$ and $B$ to be in controller normal form:
\begin{equation}
\begin{aligned}
 A &= \begin{bmatrix}
    0_{(n-1)\times 1} & I_{n-1}\\
    \times & \times_{1\times (n-1)}
    \end{bmatrix},\quad B = \begin{bmatrix}
    0_{(n-1)\times 1}\\
    \times
    \end{bmatrix},
\end{aligned}
\end{equation}
where we denote the last row of $A$ by $A_n$ and the last element of $B$ by $B_n\ne 0$. Finally, we assume $C$ to be given by $C = \begin{bmatrix}1 &0_{(n-1)\times 1}
    \end{bmatrix}.$
    
Noting that these assumptions imply the pair (A,B) is controllable, we conclude that there exists a stabilizing controller $u(x) = Kx$ and a symmetric positive definite matrix $P$ such that:
\begin{equation}
\label{eq:riccati}
(A+BK)^TP+P(A+BK)=-Q,
\end{equation}
is satisfied for some symmetric positive definite matrix $Q$. Given that we do not have access to measurements of the state $x$, we adopt the controller:
\begin{equation}
\label{eq:ddcontroller}
u=K\hat{x},
\end{equation}
instead, where $\hat{x}=(\hat{x}_1,\hdots,\hat{x}_n)$ and $\hat{x}_i$ is the dirty derivative approximation of $x_i=\frac{d^{i-1}}{dt^{i-1}}y$.

Our goal is to provide theoretical guarantees under which global asymptotic stability is preserved when using approximates provided by dirty derivatives in place of the state. It should be noted that as dirty derivative don't necessarily converge to the actual derivatives, the well known separation principle cannot be used here.

\section{General Output Feedback}
\label{sec:GeneralOutputFeedback}
The dynamical system \eqref{eq:system1} in closed-loop with the controller \eqref{eq:ddcontroller} and the dirty derivative approximations given by \eqref{eq:dirtyderivative2} results in the system:
\begin{equation}\label{MainSystem}
    \begin{cases}
        \dot x &= Ax+BK\begin{bmatrix}\widehat x_1 & \widehat x_2 & \dots & \widehat x_n\end{bmatrix}^T\\
        \dot{\widehat{x}}_1 &= -\sigma(\widehat x_1-x_1)\\
        \dot{\widehat{x}}_2 &= -\sigma(\widehat x_2-\dot{\widehat x}_1)\\
        &\vdots\\
        \dot{\widehat{x}}_n &= -\sigma(\widehat x_n-\dot{\widehat x}_{n-1}),
    \end{cases}
\end{equation}
where $n\in\N$ and $\sigma\in\R_{>0}$ is a design parameter. \\

\begin{remark}
One can choose not to filter measurements before computing the dirty derivative approximations by defining $\widehat{x}_1 = y$ in system \eqref{MainSystem}. If one does so, the theorem below follows through with only minor modifications to the proof. Filtering measurements before computing the dirty derivatives is a design choice left to the user where the need to counterbalance noise attenuation versus faster response time in the approximations comes into play. 
\end{remark}
With this system at hand we can now introduce our main result, whose proof can be be found in the appendix.
\begin{theorem}\label{theo:main}
    There always exists $\underline\sigma\in\R_{>0}$ such that for any $\sigma\in(\underline\sigma, \infty)$ the linear time invariant dynamical system~\eqref{MainSystem} is asymptotically stable.
\end{theorem}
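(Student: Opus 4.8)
The plan is to exploit the two–time–scale structure induced by $\sigma$. Since \eqref{MainSystem} is linear and time-invariant, global asymptotic stability is equivalent to Hurwitzness of its system matrix, and $\sigma$ plays the role of a high-gain/fast parameter. I would therefore treat $\epsilon=1/\sigma$ as a singular perturbation parameter, separate the slow plant dynamics from the fast dirty-derivative dynamics, and close the argument with a composite Lyapunov function of the form $V = x^\top P x + \mu\,\zeta^\top\zeta$, where $\zeta$ is a suitably rescaled error and $P$ is the matrix furnished by \eqref{eq:riccati}.

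First I would pass to the error coordinates $e=\widehat x - x$, with $\widehat x=(\widehat x_1,\dots,\widehat x_n)$. In these coordinates the plant part of \eqref{MainSystem} reads $\dot x=(A+BK)x+BKe$, whose nominal term $(A+BK)x$ is exponentially stable and certified by the Lyapunov identity \eqref{eq:riccati}; the fast variables enter the slow dynamics only through the coupling $BKe$. The remaining task is to show that $e$ decays at a rate growing with $\sigma$ and that this coupling can be dominated.

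Next I would write the error dynamics. Differentiating \eqref{eq:dirtyderivative2} and using $\dot x_i=x_{i+1}$ yields a recursion $\dot e_{i+1}=-\sigma e_{i+1}+\sigma\dot e_i-x_{i+2}$, with $\dot e_1=-\sigma e_1-x_2$ and a final equation carrying the feedback term. Substituting the lower equations into the higher ones shows that $e_j$ enters $\dot e_i$ with coefficient proportional to $\sigma^{\,i-j+1}$, so the dominant part of the error subsystem is lower triangular with entries that grow with $\sigma$. The key normalization is therefore the diagonal rescaling $\zeta=D_\sigma^{-1}e$ with $D_\sigma=\mathrm{diag}(1,\sigma,\dots,\sigma^{n-1})$: conjugating the dominant lower-triangular matrix by $D_\sigma$ collapses every nonzero entry to $-\sigma$, so that $\dot\zeta=-\sigma L\zeta+(\text{bounded perturbations})$, where $L$ is the lower-triangular matrix of ones. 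Since $L+L^\top=I+\mathbf 1\mathbf 1^\top\succ 0$, the fast subsystem is exponentially stable at a rate proportional to $\sigma$, certified by the identity matrix.

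Finally I would compute $\dot V$ along \eqref{MainSystem}. The negative contributions are $-x^\top Q x$ from the slow part and $-\mu\sigma\,\zeta^\top(L+L^\top)\zeta$ from the fast part, the latter growing linearly in $\sigma$; the indefinite contributions are the slow–fast and fast–slow couplings. I expect the main obstacle to be precisely these cross terms: because the fast-to-slow coupling $BKe=BK D_\sigma\zeta$ carries positive powers of $\sigma$ (up to $\sigma^{\,n-1}$), this is a \emph{nonstandard} singular perturbation, and a single crude application of Young's inequality with a constant weight $\mu$ is too lossy to close the estimate once $n\ge 2$. The resolution is to exploit the triangular structure of the normalized dynamics and to absorb the couplings hierarchically, dominating each term of order $\sigma^{k}$ by the large fast damping $\mu\sigma\,\zeta^\top(L+L^\top)\zeta$ together with $-x^\top Q x$, choosing $\mu$ (and, if necessary, a graded weight $\mathrm{diag}(\mu_1,\dots,\mu_n)$ in place of $\mu I$) as a function of $\sigma$. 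Completing the squares in this structured manner gives $\dot V<0$ for all $\sigma$ exceeding some threshold $\underline\sigma$, which is the asserted conclusion.
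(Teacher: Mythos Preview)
Your diagnosis of the difficulty is accurate: after the rescaling $\zeta=D_\sigma^{-1}e$ the boundary layer becomes $\dot\zeta=-\sigma L\zeta+M(\sigma)x$ with $M$ bounded, but the fast-to-slow coupling $BKe=BK D_\sigma\zeta$ carries a factor $\sigma^{\,n-1}$, so this is indeed a nonstandard singular perturbation. The gap is in the last paragraph: the suggested remedy of a graded diagonal weight $W=\mathrm{diag}(\mu_1,\dots,\mu_n)$ cannot close the estimate once $n\ge 3$. The reason is structural. On the one hand, for the fast term $-\sigma\,\zeta^\top(WL+L^\top W)\zeta$ to be negative definite one computes $(WL+L^\top W)_{ij}=\mu_{\max(i,j)}$ on the off-diagonal and $2\mu_i$ on the diagonal; checking the leading principal minors shows that positive definiteness forces all the $\mu_i$ to be of comparable size (for $n=3$, for instance, one needs $\mu_2<4\mu_1$ and $\mu_3<\mu_2(4\mu_1-\mu_2)/\mu_1<4\mu_2$). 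On the other hand, absorbing $2x^\top PBK D_\sigma\zeta$ into $-\lambda\|x\|^2$ and $-\sigma\mu_n\zeta_n^2$ requires $\mu_n\gtrsim\sigma^{\,2n-3}$, while absorbing the slow-to-fast cross term $2\zeta^\top WMx$ forces $\mu_i\lesssim\sigma$. These constraints are incompatible for $n\ge 3$, so no diagonal weight in the $\zeta$ coordinates yields a strict Lyapunov function; the phrase ``hierarchical absorption'' does not, by itself, point to a mechanism that survives this obstruction.

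The paper resolves the difficulty in a genuinely different way: instead of rescaling $n$ error variables, it \emph{augments} the error space to $\tfrac{n(n+1)}{2}$ variables $e_{i,j}=\widehat x_i^{(j)}-x_{i+j}$ (the $j$-th time derivative of the $i$-th dirty derivative minus the state it is meant to approximate). In these coordinates one gets $\dot e_{i,j}=-\sigma e_{i,j}+\sigma e_{i-1,j+1}+(\text{terms independent of }\sigma)$, so the only $\sigma$-dependent part of $\dot V$ for $V=x^\top P x+\sum_{i,j}e_{i,j}^2$ is $-2\sigma\bigl[\sum e_{i,j}^2-\sum e_{i,j}e_{i-1,j+1}\bigr]$, which is shown to be negative definite in $e$ via an elementary tridiagonal quadratic-form argument along anti-diagonals. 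Every remaining cross term is $O(1)$ in $\sigma$, so a single Young inequality plus $\sigma$ large finishes the proof. In effect the paper builds a $\sigma$-dependent Lyapunov function on the original $2n$-dimensional state that is \emph{not} block-diagonal in $(x,e)$ --- the extra variables $e_{i,j}$ with $j\ge 1$ are linear combinations of $x$ and the $e_{k,0}$ --- and it is precisely these cross terms that compensate for the high powers of $\sigma$ you are trying to beat with a separable $V$.
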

\section{An Adaptive Control Extension}
\label{sec:AdaptiveFeedback}
An extension of particular interest to the authors, given their recent work in \cite{fraile2021datadriven}, are feedback linearized systems described by the dynamics:
\begin{equation}
\label{System2}
\begin{aligned}
\dot{x}{}={}Ax + \beta Bu, \quad y{}={}Cx, 
\end{aligned}
\end{equation}
where \begin{equation}
\begin{aligned}
    A =& \begin{bmatrix}
    0_{(n-2)\times 1} & I_{n-2}\\
    \times & \times_{1\times (n-2)}
    \end{bmatrix},\quad B = \begin{bmatrix}
    0_{(n-2)\times 1}\\
    1
    \end{bmatrix},\\
    C=&\begin{bmatrix}
    1 & 0_{(n-2)\times 1}
    \end{bmatrix},
    \quad x=\begin{bmatrix}x_1& x_2&\dots& x_{n-1}\end{bmatrix},
\end{aligned}\notag
\end{equation} and $\beta$ is an unknown nonzero constant of known sign. 

In order to stabilize this system we propose the use of a dynamic controller of the form:
\begin{equation}
\label{eq:dynamic_controller}
    \dot u = -\gamma\left(\dot x_{n-1}-K\begin{bmatrix}x_1& x_2&\dots& x_{n-1}\end{bmatrix}^T\right),
\end{equation}
where $\gamma \in \R$ satisfies $\mathrm{sign}(\gamma)=\mathrm{sign}(\beta)$ and $K \in \R^{n-1}$ is such that the equality:
\begin{equation}
(A+BK)^TP+P(A+BK)=-Q,\notag
\end{equation}
is satisfied for some symmetric positive definite matrices $P$ and $Q$, existence of which is guaranteed due to $(A,B)$ being a controllable pair. Without loss of generality we assume $\beta>0$ and thus $\gamma \in \R_{>0}.$ The motivation for the use of this controller is that it asymptotically enforces the equality $\dot x_{n-1} = Kx$ without the need of explicitly estimating $\beta$.

As before, due to only having access to measurements of $x_1$, we replace all derivatives of $x_1$ in \eqref{eq:dynamic_controller} with approximations provided through dirty derivatives. Defining $x_n$ as $\dot{x}_{n-1}$ results in the following close-loop dynamical system:
\begin{equation}\label{MainSystem2}
    \begin{cases}
        \dot x_1 &= x_2\\
        &\vdots\\
        \dot x_{n-1} &= x_{n}\\
        \dot x_{n} &=A_n \dot x_{1:n-1} -\beta\gamma\left(\widehat x_{n} - K\begin{bmatrix}\widehat x_1& \dots &\widehat x_{n-1}\end{bmatrix}^T\right)\\
        \dot{\widehat{x}}_1 &= -\sigma(\widehat x_1-x_1)\\
        \dot{\widehat{x}}_2 &= -\sigma(\widehat x_2-\dot{\widehat x}_1)\\
        &\vdots\\
        \dot{\widehat{x}}_{n} &= -\sigma(\widehat x_{n}-\dot{\widehat x}_{n-1}),
    \end{cases}
\end{equation}
where $A_n$ denotes the last row of $A$ and $x_{i:j}$ denotes the vector containing states $x_i$ to $x_j$, i.e., $x_{i:j}=\begin{bmatrix}x_i&x_{i+1}&\dots&x_{j}\end{bmatrix}^T$. We are now ready to introduce our second result, whose proof can be found in the appendix.
\begin{theorem}\label{theo:main_ext}
    There always exists $\underline\gamma\in\R_{>0}$, such that for any $\gamma\in(\underline\gamma, \infty)$ there exists $\underline\sigma\in\R_{>0}$ such that for any $\sigma\in(\underline\sigma, \infty)$ the LTI dynamical system~\eqref{MainSystem2} is asymptotically stable.
\end{theorem}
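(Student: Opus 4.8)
The plan is to read \eqref{MainSystem2} as a linear time–invariant system carrying a three–level separation of time scales, and to certify its (necessarily global) asymptotic stability with a composite Lyapunov function respecting this hierarchy. The slowest variables are the physical states $\bar x:=x_{1:n-1}$; the intermediate variable is the regulation error $\zeta:=x_n-Kx_{1:n-1}$, which the dynamic controller \eqref{eq:dynamic_controller} is designed to drive to zero at a rate proportional to $\gamma$; and the fastest variables are the dirty–derivative estimation errors $e:=\widehat x-x$, driven to zero at a rate proportional to $\sigma$. Accordingly I would fix the gains in exactly the order dictated by the statement: first $\gamma>\underline\gamma$ to stabilize the regulation loop, then $\sigma>\underline\sigma(\gamma)$ to stabilize the estimation loop and dominate the interconnection. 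The estimation part is the one already treated in the proof of Theorem~\ref{theo:main}, so the genuinely new content is the adaptive layer associated with $\zeta$ and $\gamma$.

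First I would change coordinates from $(x_1,\dots,x_n)$ to $(\bar x,\zeta)$ and from $\widehat x$ to $e$, producing three subsystems whose Hurwitz certificates are available. (i) With $\zeta=0$ and $e=0$ the physical dynamics collapse to the closed loop governed by $A+BK$ (after absorbing the last row $A_n$ into the gain if necessary), which is Hurwitz with certificate $P$ from \eqref{eq:riccati}. (ii) With $e=0$, a direct computation gives $\dot\zeta=-(\beta\gamma-c)\,\zeta+\ell(\bar x)$ for a constant $c$ and a linear form $\ell$, so the $\zeta$–subsystem is exponentially stable once $\beta\gamma>c$, with certificate $\zeta^2$. (iii) The error dynamics take the form $\dot e=-Le+(\text{coupling})$, where $L$ is lower triangular with every eigenvalue equal to $\sigma$; the graded scaling $\widetilde e:=D^{-1}e$ with $D=\mathrm{diag}(1,\sigma,\dots,\sigma^{n-1})$ turns the homogeneous part into $-\sigma T$, with $T$ the unit lower–triangular matrix and $-T$ Hurwitz with certificate $P_f$ solving $T^TP_f+P_fT=I$. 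The feedback of $e$ through the control perturbs this block by a term of order $\gamma$, so the error block is Hurwitz precisely once $\sigma$ exceeds an amount proportional to $\gamma$; this is where the dependence $\underline\sigma=\underline\sigma(\gamma)$ originates.

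I would then assemble a composite Lyapunov function of the form
\begin{equation}
V=\bar x^{T}P\bar x+c_1\,\zeta^2+\widetilde e^{\,T}W(\sigma)\,\widetilde e,\notag
\end{equation}
where $W(\sigma)$ is a positive definite, $\sigma$–graded refinement of $P_f$ chosen to cope with the non–normality of $L$, and differentiate along \eqref{MainSystem2}. The diagonal blocks yield the negative margins $-\bar x^{T}Q\bar x$, $-2c_1(\beta\gamma-c)\zeta^2$ and a term of order $-\sigma\|\widetilde e\|^2$, whose sizes grow with $\gamma$ and with $\sigma$. The obstacle is the channel through which the estimation error corrupts the control, $-\beta\gamma(\widehat x_n-K\widehat x_{1:n-1})=-\beta\gamma\,\zeta-\beta\gamma(e_n-Ke_{1:n-1})$: this feeds $e$ into $\dot\zeta$ with a gain of order $\beta\gamma$, i.e.\ of order $\beta\gamma\,\sigma^{\,n-1}$ in the scaled variable, while the reciprocal term $-\beta\gamma\,\zeta$ sitting in $\dot x_n$ feeds $\zeta$ back into $\dot{\widetilde e}$ with gain of order $\beta\gamma\,\sigma^{-(n-1)}$. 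The product of these reciprocal gains is of order $(\beta\gamma)^2$, independent of $\sigma$, and it is this favorable sign/scaling structure that lets a suitably $\sigma$–graded weight $W(\sigma)$ cancel the dominant cross terms; measuring $\widetilde e$ relative to its quasi–steady–state value on the slow manifold before applying $W(\sigma)$ further removes the slow–to–fast forcing and leaves only lower–order residuals.

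The hard part will be the bookkeeping of these residual cross terms: one must verify that, after the dominant cancellation, every remaining interconnection term carries strictly fewer powers of $\sigma$ than the estimation margin of order $\sigma\|\widetilde e\|^2$ and no more powers of $\gamma$ than the regulation margin $\beta\gamma\,\zeta^2$, so that for each fixed $\gamma>\underline\gamma$ all residuals are dominated once $\sigma>\underline\sigma(\gamma)$. Equivalently, one checks that the eigenvalues of the error block stay in the open left half–plane (requiring $\sigma\gtrsim\beta\gamma$) while the remaining eigenvalues converge to those of the regulation–reduced system, itself Hurwitz for $\gamma>\underline\gamma$. Establishing that the weights $c_1,W(\sigma)$ and the thresholds $\underline\gamma,\underline\sigma(\gamma)$ can be selected consistently in the order $\gamma$–then–$\sigma$ is the crux; once $\dot V$ is rendered negative definite, asymptotic stability of \eqref{MainSystem2} follows.
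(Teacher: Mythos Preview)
Your high-level plan coincides with the paper's: the same coordinate split into $\bar x=x_{1:n-1}$, the regulation error $\zeta=x_n-K\bar x$ (the paper's $e_u$), and estimation errors; the same composite Lyapunov function $V=\bar x^TP\bar x+c_1\zeta^2+(\text{error part})$; and the same order of first fixing $\gamma>\underline\gamma$, then $\sigma>\underline\sigma(\gamma)$. The genuine divergence is in how the error block is handled, and this is where your proposal has a gap.

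You work with the $n$ plain errors $e_i=\widehat x_i-x_i$ under the high-gain scaling $\widetilde e_i=e_i/\sigma^{i-1}$, so that the homogeneous part becomes $-\sigma T\widetilde e$. But then the coupling $-\beta\gamma(e_n-Ke_{1:n-1})$ entering $\dot\zeta$ reads $-\beta\gamma\,\sigma^{n-1}\widetilde e_n+\text{l.o.t.}$, and the resulting cross term $2c_1\beta\gamma\sigma^{n-1}|\zeta|\,|\widetilde e_n|$ in $\dot V$ cannot be split by Young's inequality without either destroying the $c_1\beta\gamma\zeta^2$ margin or depositing at least $c_1\beta\gamma\,\sigma^{2(n-1)}\widetilde e_n^2$ on the error side, which the $\sigma\|\widetilde e\|^2$ margin cannot absorb for $n\ge 2$ and large $\sigma$. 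Your slow-manifold shift removes the bounded forcing $F(\bar x)$ on $\widetilde e$, not this large backward gain into $\dot\zeta$; and a block-diagonal $W(\sigma)$ that inflates the weight on $\widetilde e_n$ enough to cope then fails on the $\widetilde e$--$\bar x$ cross terms. The paper avoids this entirely by enlarging to the $\tfrac{n(n+1)}{2}$ redundant errors $e_{i,j}=\widehat x_i^{(j)}-x_{i+j}$ with \emph{unit} weights: the $\sigma$-part of $\dot V$ then decouples along anti-diagonals into tridiagonal quadratic forms (handled by an elementary negative-definiteness lemma), giving every error, including $e_{n,0}=e_n$, a uniform margin $\sigma d$. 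Consequently every $\zeta$--error cross term is $O(\beta\gamma)$ with no $\sigma$ factors and is trivially dominated by taking $\sigma$ large. That enlargement of the error coordinates, rather than any scaling, is the device that makes the bookkeeping close.
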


\section{Simulation Examples}
\label{sec:Examples}
In this section we provide two simulation examples to portray the potential benefits of using dirty derivative approximations instead of state estimates. The first example compares the dirty derivatives approximations with estimates provided by two state-of-the-art methods, a low-power peaking-free high-gain observer \cite{PFHighGain} and an algebraic de-noising and derivative estimation approach \cite{AlgebraicDDs}. The second example compares the closed-loop performance of a linear controller under measurement noise when dirty derivative approximations and state estimates are used in place of state measurements.

\subsection{Dirty derivatives as an observer}
We begin this subsection by reminding the reader that dirty derivatives provide approximations, not estimates, of the derivative of a signal. This is worth noting as one would expect the performance of any observer to outpace a dirty derivative based approximation. While this is true in the absence of noise, the low-pass filtering effect, low gain and estimation through integration aspects of dirty derivatives provide such robustness against noise that we found dirty derivatives able to compete with, and often outperform, state-of-the-art observers.

For the examples that follow we consider the recently developed low-power peaking-free high-gain observer \cite{PFHighGain}, which has been shown to outperform traditional low-power high-gain observers, and the algebraic derivative estimation method developed in \cite{AlgebraicDDs}. We reproduce the example presented in \cite{PFHighGain}, including observer gains and system's initial conditions. The system under consideration is given by:
\begin{equation}
 \begin{cases}
 \dot{x}_1 &= x_2 \\
\dot{x}_2 &= x_3 \\
\dot{x}_3 &= x_4 \\
\dot{x}_4 &= x_5 \\
\dot{x}_5 &= 0.2(x_1^2-1)-x_2-x_3-4x_4-x_5, 
 \end{cases}   \notag
\end{equation}
and the low-power peaking-free high-gain observer presented in equations (11a) in \cite{PFHighGain} has full knowledge of the dynamics. We obtain our dirty derivative approximations using equations \eqref{eq:dirtyderivative2} and algebraic derivative estimates using equations $(69)-(72)$ in \cite{AlgebraicDDs}. We chose the dirty derivative parameter $\sigma$ to be $5$, and the parameter $a$ of the method in \cite{AlgebraicDDs} to also be $5$.

While in the presence of noise dirty derivatives outperform both other methods in most of the estimates, we present only the first two derivatives to simplify the exposition. We believe it is important to emphasize to the reader the relatively simple implementation of dirty-derivatives when compared with the other two methods presented. The low-power peaking-free high-gain observer requires full knowledge of the system and a complicated process in order to select the gains and saturation limits, and the algebraic approach requires the computation of matrices $\bar{M}^{-1}$ and $C$, see equations $(72)$, $(70)$ and $(34)$ in \cite{AlgebraicDDs}.

Figure \ref{fig:SignalVsNoise} portrays the original signal $x_1$ overlaid onto the measured noisy signal obtained by adding measurement noise to $x_1$. We generate the measurement noise by filtering zero-mean Gaussian noise with a variance of $0.002$ and sampling time of $10^{-6}$ with a low-pass filter (with band $]0,200] Hz$). Figures \ref{fig:SignalVsEstimates} and \ref{fig:AbsoluteError} respectively compare the estimates and absolute estimation errors for the first and second derivatives of $x(t)$ as estimated under measurement noise by all three methods.

\begin{figure}[!h]
\centering
\includegraphics[width=0.9\textwidth]{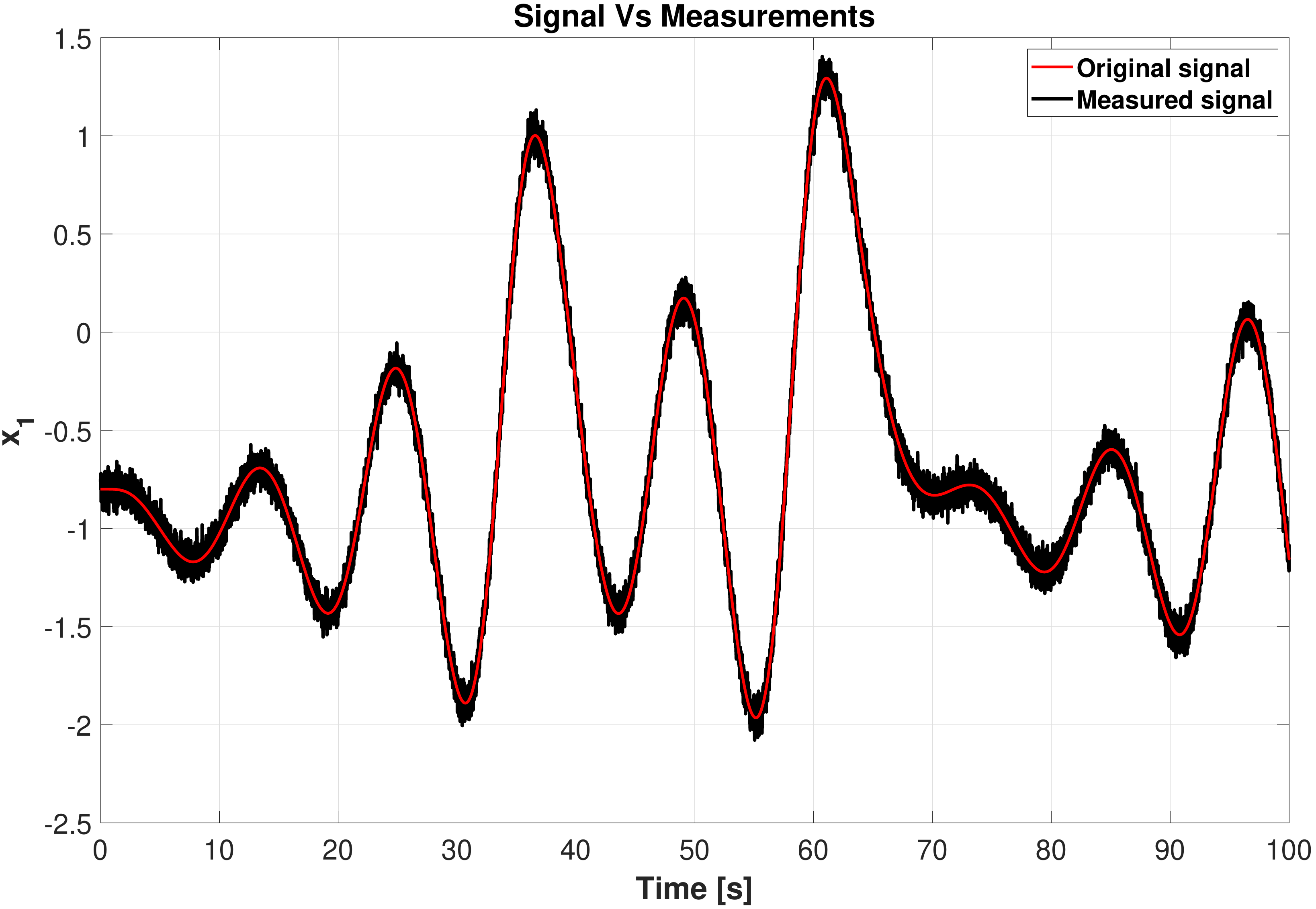}
\caption{Original signal overlaid onto the noisy signal created by adding measurement noise generated as filtered Gaussian noise in the band 0-200 Hz.}
\label{fig:SignalVsNoise}
\end{figure}

\begin{figure}[!h]
\centering
\includegraphics[width=0.9\textwidth]{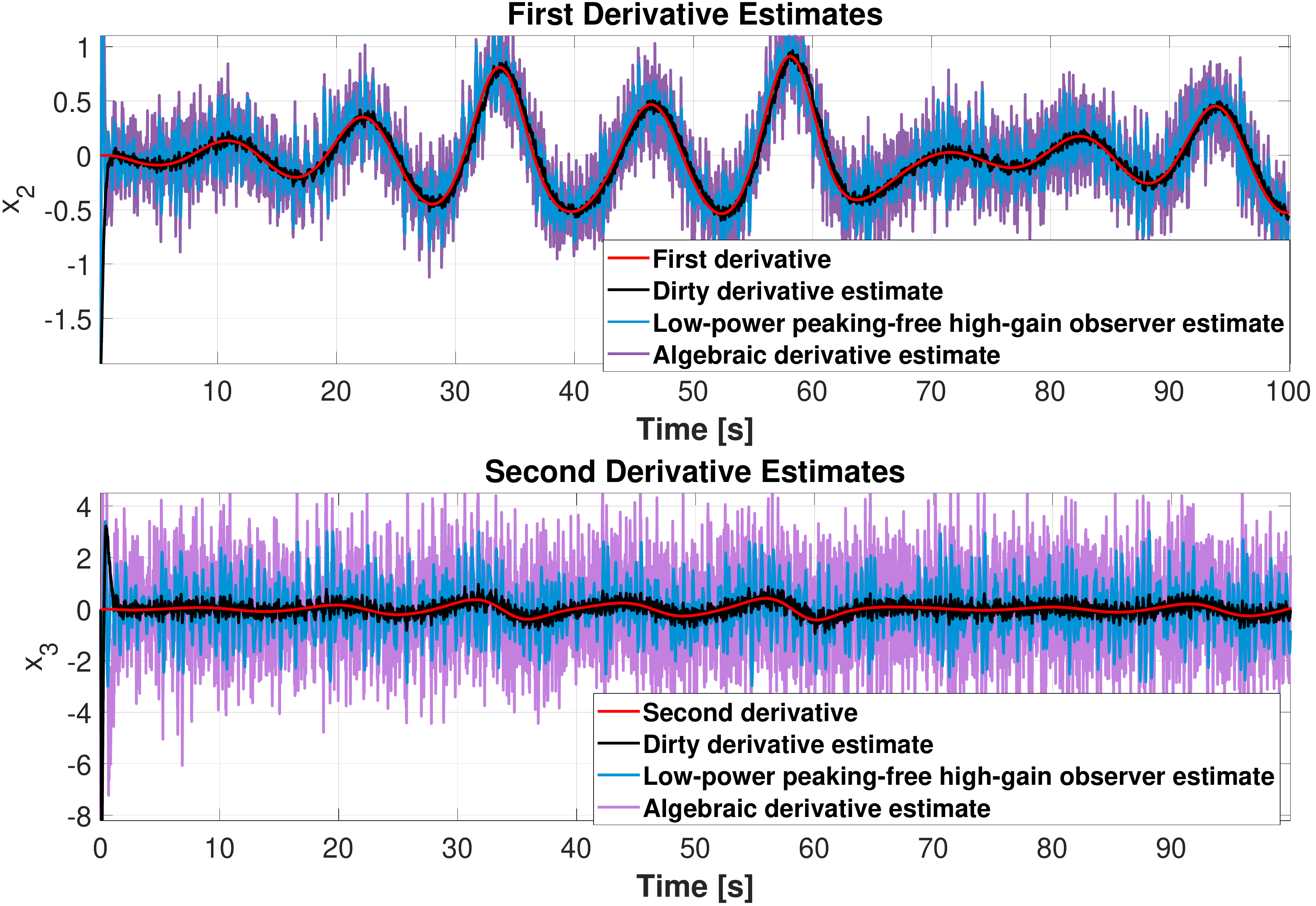}
\caption{Comparison between estimates for the first and second derivatives of the signal in Figure  \ref{fig:SignalVsNoise} provided by the dirty derivative method, the peaking-free low-power high-gain observer and the algebraic derivative method.}
\label{fig:SignalVsEstimates}
\end{figure}

\begin{figure}[!h]
\centering
\includegraphics[width=0.9\textwidth]{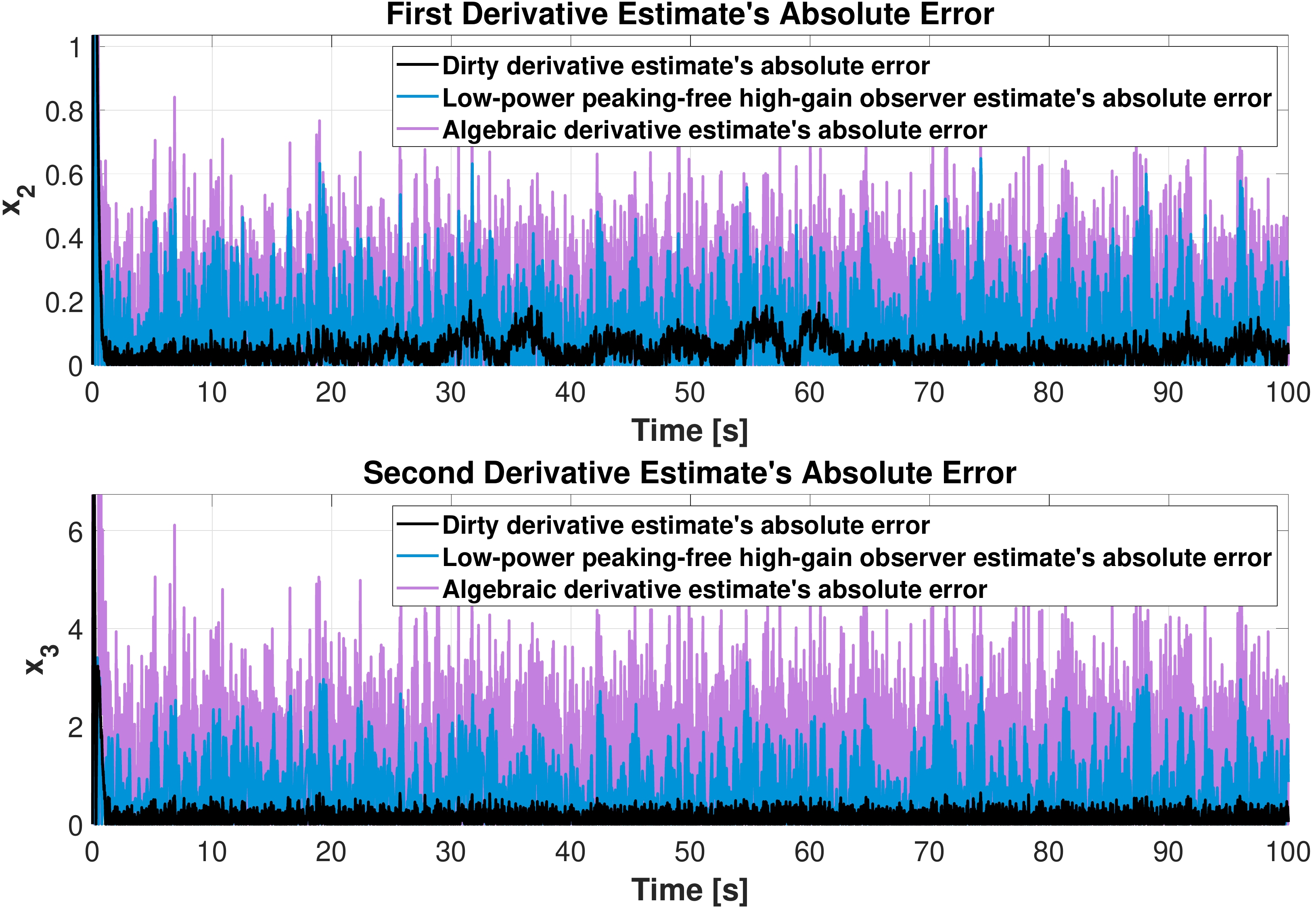}
\caption{Comparison between absolute estimation error for the first and second derivatives of the signal in Figure \ref{fig:SignalVsNoise} provided by the dirty derivative method, the peaking-free low-power high-gain observer and the algebraic derivative method.}
\label{fig:AbsoluteError}
\end{figure}

These figures underscore the potential robustness against measurement noise that dirty derivatives can offer. The next subsection emphasizes this point in a closed-loop example.

\subsection{Dirty-derivatives in closed-loop}
In this subsection we provide simulation examples for the results presented in Section \ref{sec:GeneralOutputFeedback}. Starting with the former, we consider system \eqref{eq:system1} with:
\begin{align} A_n = \begin{bmatrix} -1 & 3 & 3 & 0 & 1\end{bmatrix}, \ B_n = -4,\ x_0 = \begin{bmatrix} -1 & 5 & -1 & -3 & 3\end{bmatrix} \notag.
\end{align}
A large number of simulations were performed where each of the entries of the above entities were chosen randomly as integers in the interval $[-5,5]$ with uniform probability. Consistent performance was evidenced through these simulations and the above set of values was chosen as a good representation of the observed behavior.

We design the linear controller $u = Kx$ where $K$ is obtained by solving the linear quadratic regulator problem with costs $Q = C^TC$ and $R = 0.001$, and measurements satisfy the equation: $y = Cx+v,$ where $v$ is generated as in the previous section with a variance of $0.01$. We then implement $u=K\widehat{x}$ where $\widehat{x}$ is computed through: dirty derivatives as in \eqref{eq:dirtyderivative2} with $\sigma=75$; algebraic derivative estimation as in \cite{AlgebraicDDs} with $a=75$; and a high-gain observer as in section 2.2 in \cite{Khalil2002} where the poles of $H_0$ are set at $\{-1, -2, -3, -4, -5\}$ and $\varepsilon = \frac{1}{75}$. 

Figures \ref{SignalVsEstimatesCloseLoop} and \ref{AbsoluteErrorCloseLoop} respectively compare the trajectories and absolute trajectory tracking errors for these three close-loop systems. Note that all three methods achieve similar steady state performance despite the varying levels of complexity in their implementations. Regarding the transient, we observe both the dirty-derivative and the algebraic estimation methods to be less affected by peaking than the high-gain observer. 

\begin{figure}[!h]
\centering
\includegraphics[width=0.9\textwidth]{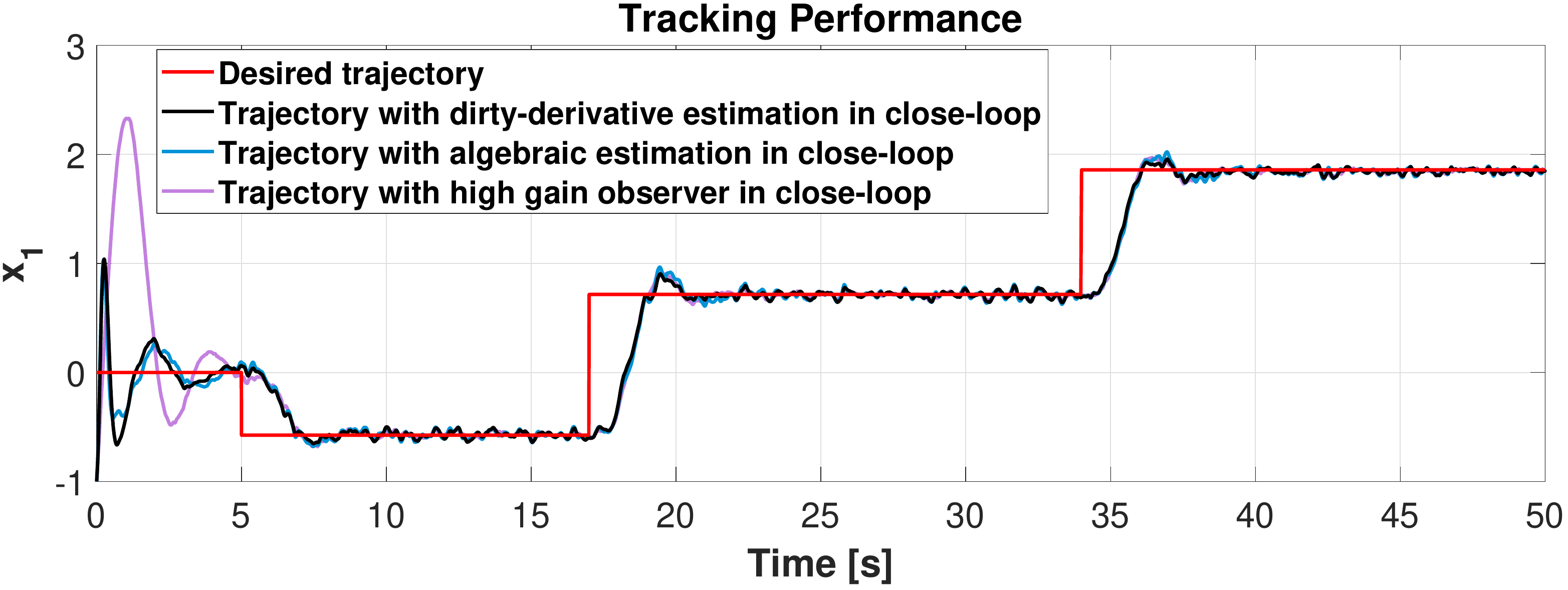}
\caption{Comparison between the desired trajectory and the achieved trajectories by the controller $u=K\widehat{x}$ when estimates are provided by the dirty derivative method, a high-gain observer and the algebraic derivative method.}
\label{SignalVsEstimatesCloseLoop}
\end{figure}

\begin{figure}[!h]
\centering
\includegraphics[width=0.9\textwidth]{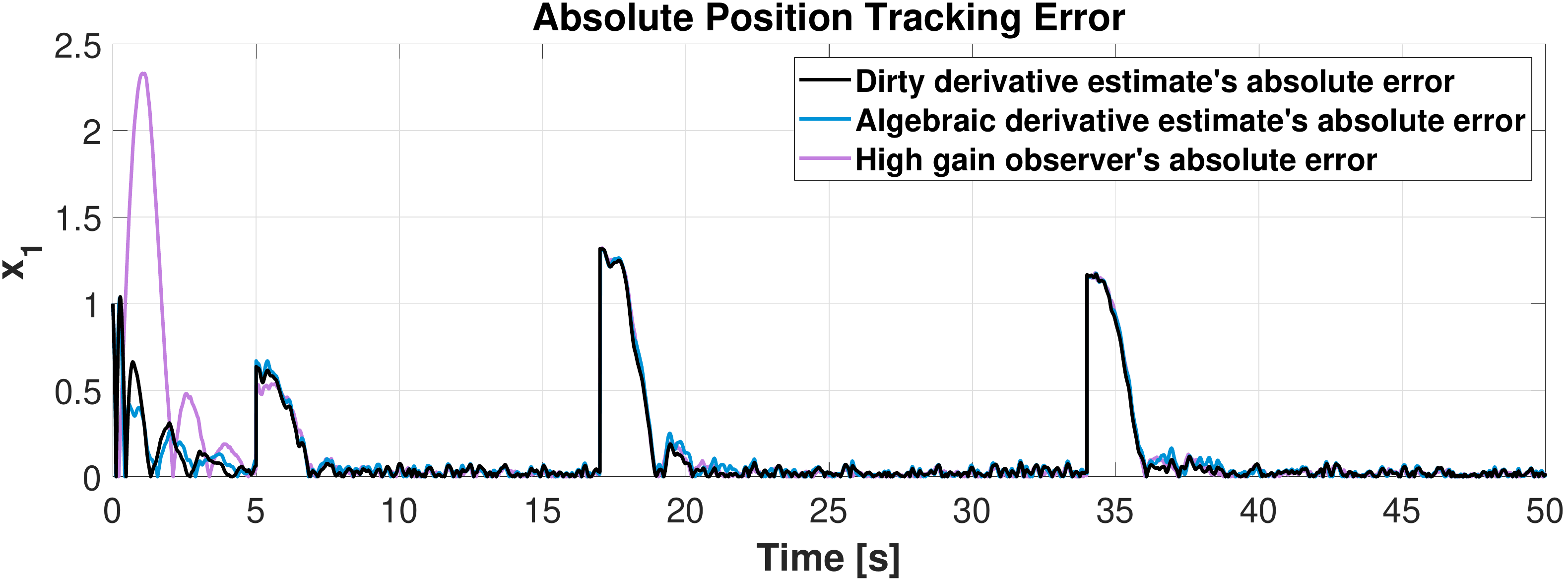}
\caption{Comparison of absolute tracking error incurred by the controller $u=K\widehat{x}$ when estimates are provided by the dirty derivative method, a high-gain observer and the algebraic derivative method.}
\label{AbsoluteErrorCloseLoop}
\end{figure}

\section{Conclusion}
In this paper we have provided two Lyapunov based proofs enabling the use of dirty derivatives in place of observers when performing output feedback for a large class of linear time invariant systems, i.e., arbitrary order systems in canonical control form. Being independent of the plant's model or inputs, having a single tuning parameter, low gains and noise attenuation characteristics, positions the dirty derivatives as an easy-to-use, straight forward estimation method. Future research involves their extension to general observable and controllable systems, along with investigating the relationships between dirty derivatives and the algebraic estimation method presented in \cite{AlgebraicDDs}.

\section{Appendix}
\subsection{Proof of Theorem \eqref{theo:main}}
\begin{theorem}
    There always exists $\underline\sigma\in\R_{>0}$ such that for any $\sigma\in(\underline\sigma, \infty)$ the linear time invariant dynamical system~\eqref{MainSystem} is asymptotically stable.
\end{theorem}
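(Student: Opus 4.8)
The plan is to treat \eqref{MainSystem} as a two-time-scale (singularly perturbed) linear system in which $\sigma$ plays the role of a large gain, and to certify stability with a composite Lyapunov function. The guiding intuition is that, as $\sigma\to\infty$, the dirty derivatives satisfy $\widehat x_i\to x_i$ on a fast time scale, so the slow dynamics collapse to the reduced system $\dot x=(A+BK)x$, which is asymptotically stable by construction through the Lyapunov equation \eqref{eq:riccati}.

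First I would pass to error coordinates $e_i=\widehat x_i-x_i$. Using $\dot x_i=x_{i+1}$ for $i<n$ together with the recursion $\dot{\widehat x}_i=-\sigma(\widehat x_i-\dot{\widehat x}_{i-1})$, a direct computation gives $\dot e=(M-BK)e+g(x)$, where $M$ is the lower-triangular matrix with entries $M_{ij}=-\sigma^{\,i-j+1}$ for $j\le i$ and the forcing $g(x)$ is linear in $x$; the slow block reads $\dot x=(A+BK)x+BKe$, since $Ax+BK\widehat x=(A+BK)x+BKe$. The key observation is that $M=-\sigma\,DLD^{-1}$ with $D=\mathrm{diag}(\sigma,\sigma^2,\ldots,\sigma^n)$ and $L$ the lower-triangular all-ones matrix, whose eigenvalues are all equal to $1$; hence in the scaled coordinate $\zeta=D^{-1}e$ the fast block becomes $\dot\zeta=\sigma A_f(\sigma)\zeta+h(x,\sigma)$ with $A_f(\sigma)=-L-\tfrac1\sigma D^{-1}BKD\to -L$, a fixed Hurwitz matrix, and with $h(x,\sigma)=O(1/\sigma)\,|x|$.

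Second, I would fix the two Lyapunov certificates: the matrix $P$ from \eqref{eq:riccati} for the slow block, and the symmetric positive definite $P_f$ solving $(-L)^TP_f+P_f(-L)=-I$ for the fast block, which exists because $-L$ is Hurwitz. Choosing $\sigma$ large enough that $A_f(\sigma)^TP_f+P_fA_f(\sigma)\preceq-\tfrac12 I$, I would then test the composite candidate $V=x^TPx+c\,\zeta^TP_f\zeta$ for a weight $c>0$ to be selected, and bound $\dot V$ by $-\lambda_{\min}(Q)|x|^2-\tfrac{c\sigma}{2}|\zeta|^2$ plus cross terms of the form $x^TPBKD\zeta$ (from the feedback re-entering the slow block) and $c\,\zeta^TP_f h$ (from the slow state forcing the fast block). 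The goal is to show that for a suitable $c$ and a sufficiently large threshold $\underline\sigma$ the two negative definite terms dominate the cross terms, making $\dot V$ negative definite and hence certifying asymptotic stability.

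The hard part is precisely the domination of the slow-fast cross coupling. Although all eigenvalues of the fast block sit at $-\sigma$, the block is strongly non-normal (a single Jordan-type structure once the factor $\sigma$ is removed), so the certificate $P_f$ is ill-conditioned: after the scaling $\zeta=D^{-1}e$ it weights the last error direction $e_n$ only like $\sigma^{-2n}$, yet it is exactly $e_n$ that re-enters the slow dynamics through $BKe$ with an $O(1)$ coefficient $B_nK_n$. Consequently the coupling term $x^TPBKD\zeta$ carries large powers of $\sigma$, and closing the argument requires letting the weight $c$ grow with $\sigma$ and then checking that the induced growth in the companion term $c\,\zeta^TP_f h$ remains controlled; pinning down the admissible $c(\sigma)$ and the resulting $\underline\sigma$ is the delicate calculation on which the whole proof turns. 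I would also treat the last-row term $\dot x_n=A_nx+B_nK\widehat x$ separately, but it is absorbed into the same $(A+BK)x+BKe$ decomposition and does not change the structure.
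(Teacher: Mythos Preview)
Your two-time-scale outline is natural, but the step you flag as ``the delicate calculation'' is in fact a genuine obstruction, not just a bookkeeping exercise. With the scaling $\zeta=D^{-1}e$, $D=\mathrm{diag}(\sigma,\ldots,\sigma^n)$, the slow block reads $\dot x=(A+BK)x+BKD\zeta$ and the fast block $\dot\zeta=\sigma A_f(\sigma)\zeta+h$, with $A_f(\sigma)\to-L$ and $\|h\|\le C_2\sigma^{-1}\|x\|$. The difficulty is that $\|BKD\|\ge |B_nK_n|\sigma^{\,n}$ (and $K_n\ne 0$ whenever $K$ shifts the spectrum of $A$), so bounding the backward cross term $2x^TPBKD\zeta$ by Young's inequality forces the weight $c$ on $\zeta^TP_f\zeta$ to grow like $\sigma^{2n-1}$. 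Feeding this back into the forward cross term $2c\,\zeta^TP_fh$ then produces a contribution of order $\sigma^{4n-4}\|\zeta\|^2$, while the damping you have available is only $c\sigma\sim\sigma^{2n}$. For every $n\ge 3$ the damping loses, and for $n=2$ it is borderline; hence the composite Lyapunov argument with a fixed $P_f$ solving $(-L)^TP_f+P_f(-L)=-I$ does not close. The underlying reason is exactly the non-normality you mention: the cancellation that makes $e_i=\widehat x_i-x_i=O(\sigma^{-1})$ in steady state (namely $(L^{-1}h)_i=-\sigma^{-i}x_{i+1}$) is invisible to any quadratic form equivalent to $\|\zeta\|^2$.

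The paper circumvents this by enlarging the error state rather than rescaling it: it introduces $\tfrac{n(n+1)}{2}$ variables $e_{i,j}=\widehat x_i^{(j)}-x_{i+j}$, $0\le j\le n-i$, and takes $V=x^TPx+\sum_{i,j}e_{i,j}^2$. In these (redundant) coordinates every cross term between $x$ and the errors carries coefficients that are \emph{bounded uniformly in $\sigma$}, while the purely-error part equals $-2\sigma$ times a fixed quadratic form $\sum_k\bigl(\sum_\ell e_{k-\ell,\ell}^2-\sum_\ell e_{k-\ell,\ell}e_{k-\ell-1,\ell+1}\bigr)$, which is shown to be positive definite by an elementary tridiagonal lemma. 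Thus the single $\sigma$-multiplied term dominates all the rest once $\sigma$ exceeds a computable threshold. If you want to salvage your route, you would need a $\sigma$-dependent $P_f$ (equivalently, a non-diagonal rescaling of $e$) that encodes the same cancellation the paper captures through the extra $e_{i,j}$; with the Lyapunov data you proposed, the argument does not go through for $n>2$.
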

\begin{proof}
    We prove the above claim by a Lyapunov argument. First we introduce a set of $\frac{n(n+1)}{2}$ error variables that we index as:
    \begin{equation}
    \label{ErrorDef}
        \begin{aligned}
        \forall i\in\{1,2,\dots,n\},\forall j&\in\{0,1,\dots,n-i\}:\\
        &e_{i,j} = \widehat x_i^{(j)} - x_{i+j},      
    \end{aligned}
    \end{equation}
    where the superscript in parenthesis denotes the $j$-th derivative of a variable with respect to time. Note that the maximum value of $j$ is $n-i$, so that $i+j\leq n$ and the error term $e_{i,j}$ is always well-defined.
    
    To ease the exposition of our method, the newly defined error variables are organized in a matrix as follows, with the $i,j$-th entry denoting the error variable $e_{i,j-1}$:
    \begin{equation}\label{eq:errs}
        \begin{aligned}
        E &= \begin{bmatrix}
            e_{1,0} & e_{1,1} & \dots & e_{1,n-1}\\
            e_{2,0} & e_{2,1} & \dots &  0\\
            \vdots & \vdots & \iddots & \vdots\\
            e_{n,0} & 0 & \dots & 0
        \end{bmatrix}\\
        &= \begin{bmatrix}
            \widehat x_1-x_1 & \dot{\widehat x}_1-x_2 & \dots & \widehat x_1^{(n-2)}-x_n\\
            \widehat x_2-x_2 & \dot{\widehat x}_2-x_3 & \dots & 0\\
            \vdots & \vdots & \iddots & \vdots\\
            \widehat x_n-x_n & 0 & \dots & 0
        \end{bmatrix}.
        \end{aligned}
    \end{equation}
    
    Given \eqref{ErrorDef}, the time derivatives of the error variables are given by:
    \begin{equation}\label{eq:err_dot}
        \begin{aligned}
            \dot e_{i,j} = \dot{\widehat x}_i^{(j)}-\dot x_{i+j} &= -\sigma\left(\widehat x_i-\dot{\widehat x}_{i-1}\right)^{(j)}-\dot x_{i+j}\\
            &= -\sigma(\widehat x_i^{(j)}-\widehat x_{i-1}^{(j+1)})-\dot x_{i+j},
        \end{aligned}
    \end{equation}
    where the second equality is obtained by substituting $\dot{\widehat x}_i$ with the corresponding right-hand side in the equations of the dynamical system~\eqref{MainSystem}. Furthermore, note that we have defined $\dot{\widehat x}_0 = x_1$ for conciseness. We now introduce a set of properties that these error derivatives possess that are instrumental in the rest of the proof:
    \begin{enumerate}
        \item If $i=1$:
        \begin{equation}\notag
            \begin{aligned}
                \widehat x_i^{(j)}-\widehat x_{i-1}^{(j+1)} &= \widehat x_1^{(j)}-\widehat x_{0}^{(j+1)} = \widehat x_1^{(j)}-x_{1}^{(j)} \\
                &= \widehat x_1^{(j)}-x_{1+j} = e_{1,j}.
            \end{aligned}
        \end{equation}
        \item If $i>1$:\\
        We notice from~\eqref{ErrorDef} that $e_{i-1,j+1} = \widehat x_{i-1}^{(j+1)}-x_{i+j}$. Then:
        \begin{equation}
            \begin{aligned}
            \widehat x_i^{(j)}-\widehat x_{i-1}^{(j+1)} &= \widehat x_i^{(j)}-e_{i-1,j+1}-x_{i+j} \\ &= e_{i,j}-e_{i-1,j+1}.
            \end{aligned}\notag
        \end{equation}

        \item If $i+j<n$:\quad $$\dot x_{i+j} = x_{i+j+1}.$$
        \item If $i+j=n$:
        \begin{equation}
            \begin{aligned}
                \dot x_{i+j} & =A_n x +B_n K\begin{bmatrix}\widehat x_1&\widehat x_2&\dots&\widehat x_n\end{bmatrix}^T\\
                &=A_nx \\
                & \ + B_nK\begin{bmatrix}x_1+e_{1,0}&x_2+e_{2,0}&\dots&x_n+e_{n,0}\end{bmatrix}^T\\
                &= A_nx + B_nKx+B_nK\begin{bmatrix}e_{1,0}&e_{2,0}&\dots&e_{n,0}\end{bmatrix}^T\\
                &=K^*x+B_nKe_0, 
            \end{aligned}\notag
        \end{equation}
        where we define $e_0 = \begin{bmatrix}e_{1,0}&e_{2,0}&\dots&e_{n,0}\end{bmatrix}^T,$ and $K^* = A_n+B_nK.$
    \end{enumerate}
    As a result of these properties, we compute and organize the error derivatives in a new matrix, as shown in \eqref{eq:err_dots} at the top of the next page.
\begin{figure*}[!t]
\normalsize
\setcounter{MYtempeqncnt}{\value{equation}}
    \begin{equation}\label{eq:err_dots}
        \dot E = \begin{bmatrix}
            -\sigma e_{1,0}-x_2 & -\sigma e_{1,1}-x_3 & \dots & -\sigma e_{1,n-1}-K^*x-B_nKe_0\\
        -\sigma e_{2,0} + 
        \sigma e_{1,1}-x_3 & -\sigma e_{2,1}+\sigma e_{1,2}-x_4 & \dots & 0\\
        \vdots & \vdots & \iddots & \vdots\\
        -\sigma e_{n,0}+\sigma e_{n-1,1}-K^*x-B_nKe_0 & 0 & \dots & 0
        \end{bmatrix}
    \end{equation}
\hrulefill
\vspace*{4pt}
\end{figure*}
Moreover, using the error variables the time derivative of $x$ can be expressed as:
    \begin{equation}
        \begin{aligned}
            \dot x &= Ax+BK\begin{bmatrix}\widehat x_1&\widehat x_2&\dots&\widehat x_n\end{bmatrix}^T\\
            &= Ax+BK\begin{bmatrix}x_1+e_{1,0}&x_2+e_{2,0}&\dots&x_n+e_{n,0}\end{bmatrix}^T\\
            &= (A+BK)x+BK\begin{bmatrix}e_{1,0}&e_{2,0}&\dots&e_{n,0}\end{bmatrix}^T\\
            &= (A+BK)x+BKe_0.
        \end{aligned}\notag
    \end{equation}
    
    Let $e\in\R^{\frac{n(n+1)}{2}}$ denote the vector containing all the error variables $e_{i,j}$, and consider the following Lyapunov function $V:\R^n\times\R^{\frac{n(n+1)}{2}}\to \R_{\geq 0}$ of $x$ and $e$:
    \begin{equation}\label{eq:main_lyapunov}
        V(x, e) = x^TPx + \sum_{i=1}^n\sum_{j=0}^{n-i}e_{i,j}^2,
    \end{equation}
    where $P$ is specified as defined in \eqref{eq:riccati}.
    We proceed to compute the time derivative of~\eqref{eq:main_lyapunov}:
    \begin{equation}\label{eq:lyapunov_dot}
        \begin{aligned}
            \dot V =& \dot x^TP x + x^TP\dot x + 2\sum_{i=1}^n\sum_{j=0}^{n-i}e_{i,j}\dot e_{i,j}\\
            =& \left((A+BK)x+BKe_0\right)^TPx \\
            &+ x^TP\left((A+BK)x+BKe_0\right)+2\sum_{i=1}^n\sum_{j=0}^{n-i}e_{i,j}\dot e_{i,j}\\
            =& -x^TQx+2x^TPBKe_0-2K^*x\sum_{i=1}^{n}e_{i,n-i}\\
            &-2\sigma\left[\sum_{i=1}^n\sum_{j=0}^{n-i}e_{i,j}^2 - \sum_{i=2}^{n}\sum_{j=0}^{n-i}e_{i,j}e_{i-1,j+1}\right]\\
            &-2B_nKe_0\sum_{i=1}^{n}e_{i,n-i}-2\sum_{i=1}^{n-1}\sum_{j=0}^{n-i-1}e_{i,j}x_{i+j+1}.
        \end{aligned}
    \end{equation}
    The first term in the last equality was obtained through \eqref{eq:riccati}, and the last four terms were obtained by expanding $\sum_{i=1}^n\sum_{j=0}^{n-i}e_{i,j}\dot e_{i,j}$ according to~\eqref{eq:errs} and~\eqref{eq:err_dots}.
    With the preceding at hand we can prove that~\eqref{eq:main_lyapunov} is strictly negative definite for sufficiently large values of $\sigma$. In the following we make repeated use of Lemma~\ref{eq:complete_square}\footnote{For space reasons we don't report the full proof, but it can be easily obtained by noting that $(\sqrt{\varepsilon}a\mp \sqrt{\varepsilon}^{-1}b)^T(\sqrt{\varepsilon}a\mp \sqrt{\varepsilon}^{-1}b) = \varepsilon a^Ta+\varepsilon^{-1}b^Tb\mp 2a^Tb \geq 0$.}.
    \begin{lemma}\label{eq:complete_square}
     All triples ${a,b,\varepsilon}$ with $a,b\in\R^n$ and $\varepsilon\in\R_{>0}$ satisfy the inequality:
    \begin{equation}
        \pm 2a^Tb \leq \varepsilon \norm{a}^2 + \frac{1}{\varepsilon}\norm{b}^2.\notag
    \end{equation}
    \end{lemma}
    We proceed by deriving upper bounds for all of the terms in~\eqref{eq:lyapunov_dot}.
    \begin{enumerate}
        \item Since $Q$ is strictly positive definite, we know that:
        \begin{equation}
            -x^TQx \leq -\lambda\norm{x}^2,\notag
        \end{equation}
        where $\lambda$ is the smallest eigenvalue of $Q$.
        
        \item By using Lemma~\eqref{eq:complete_square} with $a=x$ and $b=PBKe_0$, we can write:
            \begin{equation}
    \begin{aligned}
            2x^TPBKe_0&\leq \varepsilon\norm{x}^2+\frac{\norm{PBK}^2}{\varepsilon}\norm{e_0}^2\\
            &\leq \varepsilon\norm{x}^2+\frac{\norm{PBK}^2}{\varepsilon}\sum_{i=1}^n\sum_{j=0}^{n-i}e_{i,j}^2,
    \end{aligned}\notag
    \end{equation}
        for any $\varepsilon>0$.
        
        \item By using Lemma~\eqref{eq:complete_square} with $b=\sum_{i=1}^{n}e_{i,n-i}$ and  $a=K^*x$, we can write:
        \begin{equation}
        \begin{aligned}
            -2K^*x\sum_{i=1}^{n}e_{i,n-i}&\leq \varepsilon\norm{K^*}^2\norm{x}^2 + \frac{1}{\varepsilon}\left(\sum_{i=1}^{n}e_{i,n-i}\right)^2.
        \end{aligned}\notag
        \end{equation}
        Note that $\left(\sum_{i=1}^{n}e_{i,n-i}\right)^2$ is a strictly positive definite quadratic form, therefore there exists a constant $c_1\in\R_{>0}$ such that:
        \begin{equation}
            \left(\sum_{i=1}^{n}e_{i,n-i}\right)^2\leq c_1\sum_{i=1}^{n}e_{i,n-i}^2,\notag
        \end{equation}
        allowing us to write:
        \begin{equation}
        \begin{aligned}
            -2K^*x\sum_{i=1}^{n}e_{i,n-i} &\leq \varepsilon\norm{K^*}^2\norm{x}^2 + \frac{c_1}{\varepsilon}\sum_{i=1}^{n}e_{i,n-i}^2\\
            &\leq \varepsilon\norm{K^*}^2\norm{x}^2 + \frac{c_1}{\varepsilon}\sum_{i=1}^n\sum_{j=0}^{n-i}e_{i,j}^2.
        \end{aligned}\notag
        \end{equation}
        
        \item Analogously to point 3, for some $c_2\in\R_{>0}$ and $\varepsilon=1$ we can write the bound:
        \begin{equation}
        \begin{aligned}
            -2B_nKe_0 & \sum_{i=1}^{n}e_{i,n-i}\\
            &\leq \norm{B_nK}^2\norm{e_0}^2+ \left(\sum_{i=1}^{n}e_{i,n-i}\right)^2\\
            &\leq \norm{B_nK}^2\sum_{i=1}^n\sum_{j=0}^{n-i}e_{i,j}^2+ c_2\sum_{i=1}^n\sum_{j=0}^{n-i}e_{i,j}^2\\
            &= \left(\norm{B_nK}^2+c_2\right)\sum_{i=1}^n\sum_{j=0}^{n-i}e_{i,j}^2.
        \end{aligned}\notag
        \end{equation}
        
        \item By repeatedly applying Lemma~\eqref{eq:complete_square} with $a=x_{i+j+1}$ and $b=e_{i,j}$, we can write:
        \begin{equation}
        \begin{aligned}
            -2\sum_{i=1}^{n-1} & \sum_{j=0}^{n-i-1}e_{i,j}x_{i+j+1}\\
            &\leq \sum_{i=1}^{n-1}\sum_{j=0}^{n-i-1}(\varepsilon x_{i+j+1}^2 + \frac{1}{\varepsilon}e_{i,j}^2)\\
            &= \varepsilon\sum_{i=1}^{n-1}\sum_{j=0}^{n-i-1}x_{i+j+1}^2+ \frac{1}{\varepsilon}\sum_{i=1}^{n-1}\sum_{j=0}^{n-i-1}e_{i,j}^2,
        \end{aligned}\notag
        \end{equation}
        and again there exist constants $c_3,c_4\in\R_{>0}$ allowing us to bound the above positive definite quadratic forms by:
        \begin{equation}
            -2\sum_{i=1}^{n-1}\sum_{j=0}^{n-i-1}e_{i,j}x_{i+j+1} \leq \varepsilon c_3\norm{x}^2 + \frac{c_4}{\varepsilon}\sum_{i=1}^n\sum_{j=0}^{n-i}e_{i,j}^2.\notag
        \end{equation}
        \item The remaining term can be written as:
        \begin{equation}
        \begin{aligned}
            &-2\sigma \left[\sum_{i=1}^n\sum_{j=0}^{n-i}e_{i,j}^2 - \sum_{i=2}^{n}\sum_{j=0}^{n-i}e_{i,j}e_{i-1,j+1}\right]\\
            & \ = 2\sigma\sum_{k=1}^n\left(-\sum_{\ell=0}^{k-1}e_{k-\ell,\ell}^2+\sum_{\ell=0}^{k-2}e_{k-\ell,\ell}e_{k-\ell-1,\ell+1}\right),
        \end{aligned}\notag
        \end{equation}
        where, through a slight abuse of notation, we define the sum $\sum\limits^j_i 1$ to be zero whenever $j<i.$ Intuitively, the inner sums for specific values of $k$ correspond to summing the elements of $E\odot \dot E$ along its anti-diagonals including only the terms that are multiplied by $\sigma$, e.g., $k=1$ corresponds to the first anti-diagonal (the top left element), $k=2$ to the second anti-diagonal, etc.
        
        We claim that for any $k$ there exists a constant $d_k\in\R_{>0}$ such that:
        \begin{equation}
        \label{ineq:negativedef}
            -\sum_{\ell=0}^{k-1}e_{k-\ell,\ell}^2+\sum_{\ell=0}^{k-2}e_{k-\ell,\ell}e_{k-\ell-1,\ell+1} \leq -d_k\sum_{\ell=0}^{k-1}e_{k-\ell,\ell}^2.
        \end{equation}
        
        This can be shown by first defining $z_{\ell} = e_{k-\ell,\ell}$ and applying the following lemma\footnote{For space reasons we won't include the proof, but it can be easily obtained by rewriting the quadratic form as $-z^TMz$, with $M\in\R^{k\times k}$ a symmetric matrix and noting that $M$ can be reduced to upper triangular form with strictly negative elements on the diagonal by using row operations. This implies that $\det (M) < 0$ and since $M$ is negative semidefinite by Gershgorin's circle theorem, it's enough to prove strict negative definiteness of $M$}.
        \begin{lemma}\label{lem:posdef}
    For any $k\in\N_{>0}$, the quadratic form:
    \begin{equation}
        -\sum_{i=1}^{k}z_i^2+\sum_{i=1}^{k-1}z_iz_{i+1}\notag
    \end{equation}
    is negative definite.
    \end{lemma}
    Finally, inequality \eqref{ineq:negativedef} is reached by noting that the negative definiteness of the left terms implies the existence of $d_k\in\R_{>0}$ such that the inequality holds. Inequality \eqref{ineq:negativedef} allows us to write:
    \begin{equation}
        \begin{aligned}
            -2\sigma &\left[\sum_{i=1}^n\sum_{j=0}^{n-i}e_{i,j}^2 - \sum_{i=2}^{n}\sum_{j=0}^{n-i}e_{i,j}e_{i-1,j+1}\right]\\
            &= 2\sigma\sum_{k=1}^n\left(-\sum_{\ell=0}^{k-1}e_{k-\ell,\ell}^2+\sum_{\ell=0}^{k-2}e_{k-\ell,\ell}e_{k-\ell-1,\ell+1}\right)\\
            &\leq -2\sigma\sum_{k=1}^n d_k\sum_{\ell=0}^{k-1}e_{k-\ell,\ell}^2 \leq -2\sigma d\sum_{k=1}^n\sum_{\ell=0}^{k-1}e_{k-\ell,\ell}^2\\
            &= -2\sigma d\sum_{i=1}^n\sum_{j=0}^{n-i}e_{i,j}^2,
        \end{aligned}\notag
    \end{equation}
    where $\sum_{i=1}^n\sum_{j=0}^{n-i}e_{i,j}^2 = \sum_{k=1}^n\sum_{\ell=0}^{k-1}e_{k-\ell,\ell}^2,$ and  $d=\min\{d_1,d_2,\dots,d_n\}.$
    \end{enumerate}
    We can now provide the full upper bound for the time derivative of $V$:
    \begin{equation}
        \begin{aligned}
            \dot V &\leq -x^TQx+2x^TPBKe_0-2K^*x\sum_{i=1}^{n}e_{i,n-i}\\
            &\quad-2\sigma\left[\sum_{i=1}^n\sum_{j=0}^{n-i}e_{i,j}^2 - \sum_{i=2}^{n}\sum_{j=0}^{n-i}e_{i,j}e_{i-1,j+1}\right]\\
            &\quad-2B_nKe_0\sum_{i=1}^{n}e_{i,n-i}-2\sum_{i=1}^{n-1}\sum_{j=0}^{n-i-1}e_{i,j}x_{i+j+1}\\
            & \leq -\lambda\norm{x}^2 + \varepsilon\norm{x}^2+\frac{\norm{PBK}^2}{\varepsilon}\sum_{i=1}^n\sum_{j=0}^{n-1}e_{i,j}^2\\
            &\quad +\varepsilon\norm{K^*}^2\norm{x}^2 + \frac{c_1}{\varepsilon}\sum_{i=1}^n\sum_{j=0}^{n-1}e_{i,j}^2\\
            &\quad +\left(\norm{B_nK}^2+c_2\right)\sum_{i=1}^n\sum_{j=0}^{n-1}e_{i,j}^2+\varepsilon c_3\norm{x}^2 \\
            &\quad + \frac{c_4}{\varepsilon}\sum_{i=1}^n\sum_{j=0}^{n-1}e_{i,j}^2 -2\sigma d\sum_{i=1}^n\sum_{j=0}^{n-1}e_{i,j}^2\\
            &\leq \left(-\lambda + \varepsilon\left(1+c_3+\norm{K^*}^2\right)\right)\norm{x}^2 \\
            &\quad + \frac{1}{\varepsilon}\left(\norm{PBK}^2+c_1+c_4\right)\sum_{i=1}^n\sum_{j=0}^{n-1}e_{i,j}^2\\
            &\quad + \left(\norm{B_nK}^2+c_2-2\sigma d\right)\sum_{i=1}^n\sum_{j=0}^{n-1}e_{i,j}^2.
        \end{aligned}\notag
    \end{equation}
    The above expression is negative definite as long as:
    \begin{equation}
    \begin{aligned}
             \varepsilon &< \frac{\lambda}{1+c_3+\norm{K^*}^2}\\
             \sigma &> \underline\sigma = \frac{\varepsilon^{-1}\left(\norm{PBK}^2+c_1+c_4\right)+\norm{B_nK}^2+c_2}{2d}.
    \end{aligned}\notag
    \end{equation}
    
    We note that negative definiteness of $\dot V$ (in the state $x$ and the error variables $e$) implies $\lim_{t\to\infty} x = 0$ and $\lim_{t\to\infty} e = 0$. However, we know from~\eqref{ErrorDef} that the estimates $\widehat x_1, \widehat x_2, \dots, \widehat x_n$ can be represented as linear combinations of the state $x$ and the errors $e$, therefore we conclude that:
    \begin{equation}
        \forall i\in\{1,2,\dots,n\}:\quad\lim_{t\to\infty}\widehat x_i = 0,\notag
    \end{equation}
    proving asymptotic stability for the original system~\eqref{MainSystem}.
\end{proof}

\subsection{Proof of Theorem \eqref{theo:main_ext}}
\begin{theorem}
    There always exists $\underline\gamma\in\R_{>0}$, such that for any $\gamma\in(\underline\gamma, \infty)$ there exists $\underline\sigma\in\R_{>0}$ such that for any $\sigma\in(\underline\sigma, \infty)$ the LTI dynamical system~\eqref{MainSystem2} is asymptotically stable. 
\end{theorem}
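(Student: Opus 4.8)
The plan is to recycle the Lyapunov machinery behind Theorem~\ref{theo:main}, the single genuinely new ingredient being the unknown gain $\beta$, which is tamed by the integral action through a sufficiently large $\gamma$. Collecting the plant states together with the auxiliary coordinate $x_n=\dot x_{n-1}$ into the extended vector $x=x_{1:n}$, the first $n$ equations of~\eqref{MainSystem2} form a chain of integrators $\dot x_i=x_{i+1}$, $i<n$, closed by
\[
\dot x_n = A_n x_{2:n} - \beta\gamma\Bigl(\widehat x_n - K\begin{bmatrix}\widehat x_1 & \cdots & \widehat x_{n-1}\end{bmatrix}^{T}\Bigr),
\]
where I have used $\dot x_{1:n-1}=x_{2:n}$. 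The argument then proceeds with two nested parameter choices that mirror the statement: first $\gamma$ is fixed large enough to make the \emph{ideal} extended closed loop (the one obtained by setting every $\widehat x_i$ equal to $x_i$) asymptotically stable, and only then is $\sigma$ chosen large enough, \emph{as a function of the now-frozen} $\gamma$, to absorb the dirty-derivative errors exactly as in Theorem~\ref{theo:main}.

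For the first step I would study the $n\times n$ companion matrix $\mathcal A(\gamma)$ governing the ideal loop, whose last row carries the $\beta\gamma$-scaled feedback and is therefore affine in $\gamma$. Since the characteristic polynomial of a companion matrix is affine in its last row, it factors as $q_0(s)+\beta\gamma\, q_1(s)$, with $\deg q_0=n$ collecting the $\gamma$-independent open-loop terms and $\deg q_1=n-1$ equal, up to normalization, to the characteristic polynomial of the $(n-1)$-dimensional reduced dynamics $\dot x_{n-1}=Kx_{1:n-1}$ enforced on the manifold $\widehat x_n=K\widehat x_{1:n-1}$. Because $\mathrm{sign}(\gamma)=\mathrm{sign}(\beta)$ gives $\beta\gamma>0$, a root-locus (equivalently, two-time-scale) argument shows that as $\gamma$ grows the $n-1$ finite roots approach the roots of $q_1$—which lie in the open left half plane by the choice of $K$ through the Lyapunov equation of Section~\ref{sec:AdaptiveFeedback}—while the remaining root escapes to $-\infty$. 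Hence there is $\underline\gamma$ such that for every $\gamma>\underline\gamma$ the matrix $\mathcal A(\gamma)$ is Hurwitz, furnishing a symmetric positive definite pair $(\mathcal P,\mathcal Q)$ with $\mathcal A(\gamma)^T\mathcal P+\mathcal P\mathcal A(\gamma)=-\mathcal Q$. To keep this pair well conditioned it is cleanest to assemble it from a composite Lyapunov function weighting the reduced state and the fast variable $\xi=x_n-Kx_{1:n-1}$ separately, but since only a single admissible $\gamma$ is needed for each instance of the theorem, any fixed $\gamma>\underline\gamma$ already yields fixed finite $\mathcal P,\mathcal Q$.

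With $\gamma$ frozen I would then replicate the error analysis almost verbatim. Defining the $\tfrac{n(n+1)}{2}$ variables $e_{i,j}=\widehat x_i^{(j)}-x_{i+j}$ as in~\eqref{ErrorDef}, the integrator chain is untouched, so $\dot x_{i+j}=x_{i+j+1}$ whenever $i+j<n$ and the error-derivative table coincides with~\eqref{eq:err_dots} except in the entries with $i+j=n$; after substituting $\widehat x_i=x_i+e_{i,0}$ these inherit the term above and therefore contribute a $\beta\gamma$-weighted linear combination of the bottom errors $e_{i,0}$. Taking $V=x^T\mathcal P x+\sum_{i,j}e_{i,j}^2$ in the spirit of~\eqref{eq:main_lyapunov}, the derivative $\dot V$ splits into the strictly negative plant term $-x^T\mathcal Q x$, the $\sigma$-term $-2\sigma\big[\sum e_{i,j}^2-\sum e_{i,j}e_{i-1,j+1}\big]$, and a finite list of indefinite cross terms. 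Completing squares with Lemma~\ref{eq:complete_square} and extracting a coefficient $d>0$ from the $\sigma$-term with Lemma~\ref{lem:posdef}, each cross term is bounded by $\varepsilon\norm{x}^2+\tfrac{c}{\varepsilon}\sum e_{i,j}^2$ for constants $c$ that now depend on $\gamma$ (the new couplings scale like $\beta\gamma$). Choosing $\varepsilon$ small and then $\sigma$ large—the order is essential—drives the coefficients of $\norm{x}^2$ and of $\sum e_{i,j}^2$ simultaneously negative, so $\dot V$ becomes negative definite in $(x,e)$, forcing $x\to 0$ and $e\to 0$; as each $\widehat x_i$ is a linear combination of $x$ and the $e_{i,j}$, all estimates and the controller state converge to the origin, establishing asymptotic stability of~\eqref{MainSystem2}.

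I expect the main obstacle to be exactly the interaction of the two large parameters rather than any single bound: one must verify that fixing $\gamma$ first does not sabotage the subsequent $\sigma$-domination, i.e. that the $\beta\gamma$-weighted error couplings injected into the $i+j=n$ rows of $\dot E$ stay dominated by $-2\sigma d\sum e_{i,j}^2$. This is precisely what dictates the nested order of quantifiers in the statement and is the only place where the sign-definite but unknown gain $\beta$ must be handled with care; everything else is a faithful transcription of the proof of Theorem~\ref{theo:main}.
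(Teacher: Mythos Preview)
Your proposal is correct and would close the theorem, but it takes a different route than the paper. You first argue, via a root-locus/singular-perturbation step, that the \emph{ideal} $n$-dimensional closed loop $\mathcal A(\gamma)$ is Hurwitz for all $\gamma>\underline\gamma$, then manufacture a full $n\times n$ Lyapunov pair $(\mathcal P(\gamma),\mathcal Q(\gamma))$ and run the Theorem~\ref{theo:main} machinery with $V=x_{1:n}^T\mathcal P x_{1:n}+\sum e_{i,j}^2$. The paper instead does exactly what you mention in passing as ``cleanest'': it introduces the fast variable $e_u=x_n-Kx_{1:n-1}$ explicitly and uses the \emph{composite} function $V=x_{1:n-1}^TPx_{1:n-1}+e_u^2+\sum e_{i,j}^2$, where $P$ is the fixed $(n-1)$-dimensional matrix from the Lyapunov equation for $A+BK$, independent of $\gamma$. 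In $\dot V$ this produces a term $-2\beta\gamma e_u^2$ directly, so the threshold $\underline\gamma$ drops out of a single scalar inequality rather than from a root-locus argument, and all $\gamma$-dependence is confined to explicit constants multiplying $\sum e_{i,j}^2$ (to be swallowed by $\sigma$). Your approach buys a more literal reduction to Theorem~\ref{theo:main} at the cost of the extra Hurwitzness argument and a $\gamma$-dependent $\mathcal P$; the paper's approach skips that step and keeps every constant explicit, which is why its final inequalities for $\underline\gamma$ and $\underline\sigma$ can be written in closed form.
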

\begin{proof}
    The proof proceeds largely analogously to the proof of Theorem~\ref{theo:main}. As before, we introduce the $\frac{n(n+1)}{2}$ error variables as defined in \eqref{ErrorDef}, that we can organize into matrix \eqref{eq:errs}.

    We define $e_0 = \begin{bmatrix}e_{1,0}&e_{2,0}&\dots&e_{n-1,0}\end{bmatrix}^T$, and introduce a new error variable $e_u$, defined as:
    \begin{equation}\label{eq:err_u}
        e_u = x_n - Kx_{1:n-1},
    \end{equation}
    that allows us to rewrite $\dot x_n$ as:
    \begin{equation}
    \begin{aligned}
        \dot x_n =& A_n \dot x_{1:n-1}-\beta\gamma\left(\widehat x_n - K\begin{bmatrix}\widehat x_1&\widehat x_2&\dots&\widehat x_{n-1}\end{bmatrix}^T\right)\\
        =&A_n \dot x_{1:n-1} -\beta\gamma (e_u+e_n-Ke_0),
    \end{aligned}\notag
    \end{equation}
    and the derivatives of $x_1,x_2,\dots,x_{n-1}$ as:
    \begin{equation}
        \begin{aligned}
            \dot x_{1:n-1} = Ax_{1:n-1}+Bx_n
            = (A+BK)x_{1:n-1}+Be_u,
        \end{aligned}\notag
    \end{equation}
    where we used~\eqref{eq:err_u} to obtain the second equality.
    
    As in~\eqref{eq:err_dot}, the time derivatives of the error variables are:
    \begin{equation}
        \begin{aligned}
            \dot e_{i,j} &= -\sigma(\widehat x_i^{(j)}-\widehat x_{i-1}^{(j+1)})-\dot x_{i+j},
        \end{aligned}\notag
    \end{equation}
    with the same properties as in Theorem~\ref{theo:main}, except for point 4 which now becomes:
    \begin{enumerate}
        \setcounter{enumi}{3}
        \item If $i+j=n$:
        \begin{equation}
            \dot x_{i+j} = \dot x_n =A_n \dot x_{1:n-1} -\beta\gamma e_u-\beta\gamma(e_n-Ke_0),\notag
        \end{equation}
    \end{enumerate}
    As a result, we again compute and organize the error derivatives in matrix form for clarity, as shown in \eqref{eq:err_dots_ext} at the top of the next page.
\begin{figure*}[!t]
\footnotesize
\setcounter{MYtempeqncnt}{\value{equation}}
    \begin{equation}\label{eq:err_dots_ext}
        \dot E = \begin{bmatrix}
            -\sigma e_{1,0}-x_2 & -\sigma e_{1,1}-x_3 & \dots & -\sigma e_{1,n-1}+A_n \dot x_{1:n-1}-\beta\gamma (e_ue_n-Ke_0)\\
        -\sigma e_{2,0} + 
        \sigma e_{1,1}-x_3 & -\sigma e_{2,1}+\sigma e_{1,2}-x_4 & \dots & 0\\
        \vdots & \vdots & \iddots & \vdots\\
        -\sigma e_{n,0}+\sigma e_{n-1,1}+A_n \dot x_{1:n-1}-\beta\gamma (e_u+e_n-Ke_0) & 0 & \dots & 0
        \end{bmatrix}
    \end{equation}
\hrulefill
\vspace*{4pt}
\end{figure*}
\normalsize
Finally, we compute the derivative of $e_u$:
    \begin{equation}
        \begin{aligned}
            \dot e_u =& \dot x_n - K\dot x_{1:n-1}\\
           =& -\beta\gamma \left(e_u+e_n-Ke_0\right)\\
           &-(K-A_n)\left((A+BK)x_{1:n-1}+Be_u\right)\\
             =& -\beta\gamma \left(e_u+e_n-Ke_0\right)\\
             &-\overline{K}\left((A+BK)x_{1:n-1}+Be_u\right),
        \end{aligned} \notag
    \end{equation}
    where we define $\overline{K} = K-A_n.$ We now consider the following Lyapunov function $V:\R^{n-1}\times\R\times\R^{\frac{n(n+1)}{2}}\to \R_{\geq 0}$ of $x$ and all the error variables, denoting the vector containing all the errors except $e_u$ by $e\in\R^{\frac{n(n+1)}{2}}$:
    \begin{equation}\label{eq:main_lyapunov_ext}
        V(x, e_u, e) = x_{1:n-1}^TPx_{1:n-1} + e_u^2 + \sum_{i=1}^n\sum_{j=0}^{n-i}e_{i,j}^2.
    \end{equation}
    Computing the time derivative of~\eqref{eq:main_lyapunov_ext} results in:
    \begin{equation}\label{eq:lyapunov_dot_ext}
        \begin{aligned}
            \dot V &= 2x_{1:n-1}^TP\dot x_{1:n-1} + 2e_u\dot e_u + 2\sum_{i=1}^n\sum_{j=0}^{n-i}e_{i,j}\dot e_{i,j}\\
            &=-x_{1:n-1}^TQx_{1:n-1}+2x_{1:n-1}^TPBe_u+2(-\beta\gamma-\overline{K}B)e_u^2\\
            &-2e_u\beta\gamma(e_n-Ke_0)-2e_u\overline{K}(A+BK)x_{1:n-1}\\
            &-2\sigma\left[\sum_{i=1}^n\sum_{j=0}^{n-i}e_{i,j}^2 - \sum_{i=2}^{n}\sum_{j=0}^{n-i}e_{i,j}e_{i-1,j+1}\right]\\
            &-2\beta\gamma e_u\sum_{i=1}^{n}e_{i,n-i}-2\beta\gamma(e_n-Ke_0)\sum_{i=1}^{n}e_{i,n-i}\\
            &-2A_n(A+BK)x_{1:n-1}\sum_{i=1}^{n}e_{i,n-i}-2A_nBe_u\sum_{i=1}^{n}e_{i,n-i}\\
            &-2\sum_{i=1}^{n-1}\sum_{j=0}^{n-i-1}e_{i,j}x_{i+j+1}.
        \end{aligned}
    \end{equation}
    
    Just like in Theorem~\ref{theo:main}, we proceed by deriving upper bounds for all of the terms in~\eqref{eq:lyapunov_dot_ext}. We make use of Lemma~\eqref{eq:complete_square} every time we encounter a mixed product of variables.
    \begin{enumerate}
        \item Denoting the smallest eigenvalue of $Q$ by $\lambda$:
        \begin{equation}
            -x_{1:n-1}^TQx_{1:n-1} \leq -\lambda\norm{x_{1:n-1}}^2.\notag
        \end{equation}
        
        \item For any $\varepsilon\in\R_{>0}$:
        \begin{equation}
            2x_{1:n-1}^TPBe_u\leq \varepsilon\norm{x_{1:n-1}}^2+\frac{\norm{PB}^2}{\varepsilon}e_u^2.\notag
        \end{equation}
        
        \item For some constant $c_1\in\R_{>0}$:
        \begin{equation}
        \begin{aligned}
            -2e_u\beta\gamma(e_n-Ke_0) &\leq e_u^2+\beta^2\gamma^2(e_n-Ke_0)^2\\
            &\leq e_u^2+ \beta^2\gamma^2c_1\sum_{i=1}^n\sum_{j=0}^{n-1}e_{i,j}^2.
        \end{aligned}\notag
            \end{equation}
        
        \item For any $\varepsilon\in\R_{>0}$:
        \begin{equation}
        \begin{aligned}
        -2e_u\overline{K}(A+BK)&x_{1:n-1}\leq\\
        &\varepsilon\norm{x_{1:n-1}}^2+\frac{\norm{\overline{K}(A+BK)}^2}{\varepsilon}e_u^2.
        \end{aligned}\notag
        \end{equation}
        
        \item For some constant $c_2\in\R_{>0}$:
        \begin{equation}
        \begin{aligned}
                      -2\beta\gamma e_u\sum_{i=1}^{n}e_{i,n-i} &\leq e_u^2 +\beta^2\gamma^2\left(\sum_{i=1}^{n}e_{i,n-i}\right)^2\\
                      &\leq e_u^2 +\beta^2\gamma^2c_2\sum_{i=1}^n\sum_{j=0}^{n-1}e_{i,j}^2. 
        \end{aligned}\notag
        \end{equation}
        
        \item For some constant $c_3\in\R_{>0}$:
        \begin{equation}
            -2\beta\gamma(e_n-Ke_0)\sum_{i=1}^{n}e_{i,n-i} \leq \beta\gamma c_3\sum_{i=1}^n\sum_{j=0}^{n-1}e_{i,j}^2.\notag
        \end{equation}
        
        \item For some $c_4\in\R_{>0}$ and any $\varepsilon\in\R_{>0}$:
        \begin{equation}
        \begin{aligned}
            -2&A_n(A+BK)x_{1:n-1}\sum_{i=1}^{n}e_{i,n-i} \\
            &\leq 2\norm{A_n(A+BK)}\norm{x_{1:n-1}}\norm{\sum_{i=1}^{n}e_{i,n-i}}\\
            &\leq \varepsilon\norm{x_{1:n-1}}^2 + \frac{\norm{A_n(A+BK)}^2}{\varepsilon}\left(\sum_{i=1}^{n}e_{i,n-i}\right)^2\\
            &\leq \varepsilon\norm{x_{1:n-1}}^2 + \frac{c_4}{\varepsilon}\sum_{i=1}^n\sum_{j=0}^{n-1}e_{i,j}^2
        \end{aligned}\notag
        \end{equation}
        
        \item For some constant $c_5\in\R_{>0}$:
        \begin{equation}
        \begin{aligned}
                      -2A_nBe_u\sum_{i=1}^{n}e_{i,n-i} &\leq e_u^2 +(A_nB)^2\left(\sum_{i=1}^{n}e_{i,n-i}\right)^2\\
                      &\leq e_u^2 +(A_nB)^2c_5\sum_{i=1}^n\sum_{j=0}^{n-1}e_{i,j}^2. 
        \end{aligned}\notag
        \end{equation}
        
        \item For some $c_6,c_7\in\R_{>0}$ and any $\varepsilon\in\R_{>0}$:
        \begin{equation}
        \begin{aligned}
            -2&\sum_{i=1}^{n-1}\sum_{j=0}^{n-i-1}e_{i,j}x_{i+j+1}\\
            =&-2\sum_{i=1}^{n-1}\sum_{j=0}^{n-i-2}e_{i,j}x_{i+j+1} -2x_{n}\sum_{i=1}^{n-1}e_{i,n-i-1}\\
            =& -2\sum_{i=1}^{n-1}\sum_{j=0}^{n-i-2}e_{i,j}x_{i+j+1}\\
            &-2(e_n+Kx_{1:n-1})\sum_{i=1}^{n-1}e_{i,n-i-1}\\
            \leq& \varepsilon c_6\norm{x_{1:n-1}}^2 + \frac{c_7}{\varepsilon}\sum_{i=1}^n\sum_{j=0}^{n-i}e_{i,j}^2.
        \end{aligned}\notag
        \end{equation}
        Note that all $x$ terms in the right-hand side of the second equality have indices between $1$ and $n-1$.
        
        \item Exactly as in Theorem~\eqref{theo:main}, for some $d\in\R_{>0}$:
        \begin{equation}
        \begin{aligned}
            -2\sigma&\left[\sum_{i=1}^n\sum_{j=0}^{n-i}e_{i,j}^2 - \sum_{i=2}^{n}\sum_{j=0}^{n-i}e_{i,j}e_{i-1,j+1}\right] \\
            &\leq -2\sigma d\sum_{i=1}^n\sum_{j=0}^{n-1}e_{i,j}^2.
        \end{aligned}\notag
        \end{equation}
    \end{enumerate}
    
    Substituting all of these bounds in~\eqref{eq:lyapunov_dot_ext}, we can bound $\dot V$ by:
    \begin{equation}
        \begin{aligned}
            \dot V \leq& 
            \left(-\lambda+\varepsilon(3+c_6)\right)\norm{x_{1:n-1}}^2\\
            &+\left(\frac{\norm{PB}^2\norm{\overline{K}(A+BK)}^2}{\varepsilon}-2\beta\gamma-2\overline{K}B+3\right)e_u^2\\
            &+\left(\beta^2\gamma^2(c_1+c_2)+\beta\gamma c_3+\frac{c_4+c_7}{\varepsilon}\right)\sum_{i=1}^n\sum_{j=0}^{n-1}e_{i,j}^2\\
            &+\left((A_nB)^2c_5-2\sigma d\right)\sum_{i=1}^n\sum_{j=0}^{n-1}e_{i,j}^2.
        \end{aligned}\notag
    \end{equation}
    The previous expression is negative definite as long as:
    \begin{equation}
        \begin{aligned}
            \varepsilon &< \frac{\lambda}{3+c_6},\\
            \gamma &> \underline\gamma = \frac{1}{2\beta}\left(\frac{\norm{PB}^2\norm{\overline{K}(A+BK)}^2}{\varepsilon}-2\overline{K}B+3\right),\\
            \sigma &> \underline\sigma \\
            &= \frac{1}{2d}\left(\beta^2\gamma^2(c_1+c_2)+\beta\gamma c_3+\frac{c_4+c_7}{\varepsilon}+(A_nB)^2c_5\right).
        \end{aligned}\notag
    \end{equation}
    
    To finish the proof, we note that negative definiteness of $\dot V$ (in the state $x$ and the error variables $e$ (except $e_u$)) implies $\lim_{t\to\infty} x_{1:n-1} = 0$, $\lim_{t\to\infty} e_u = 0$ and $\lim_{t\to\infty} e = 0$. However, both $x_n$ and the estimates $\widehat x_1,\widehat x_2,\dots,\widehat x_n$ are linear combinations of the state $x_{1:n-1}$, $e_u$, and the errors $e$, therefore we conclude that:
    \begin{equation}
    \notag
        \forall i\in\{1,2,\dots,n\}:\quad\lim_{t\to\infty}\widehat x_i = 0 \ \text{and} \ \lim_{t\to\infty}x_n = 0,
    \end{equation}
    proving asymptotic stability for the original system~\eqref{MainSystem2}.
\end{proof}

\bibliographystyle{alpha}
\bibliography{Bibliography}
\end{document}